\documentclass{amsart}
\usepackage{amssymb,latexsym}
\theoremstyle{plain}
\newtheorem{theorem}{Theorem}

\newtheorem{lemma}{Lemma}
\theoremstyle{definition}

\newtheorem{example}{Example}

\newtheorem{remark}{Remark}

\begin{document}

\title[symmetric tensor rank]
{Sets computing the symmetric tensor rank}
\author{Edoardo Ballico}
\address{Edoardo Ballico\\ 
 University of Trento\\ Department of Mathematics\\
I -- 38123 Povo (TN), Italy}
\email{ballico@science.unitn.it}
\thanks{The authors are partially supported by MIUR and GNSAGA of INdAM (Italy).}
\author{Luca Chiantini}
      \address{Luca Chiantini\\
Universita'  di Siena\\
     Dipartimento di Scienze Matematiche e Informatiche\\
     Pian dei Mantellini, 44\\
     I -- 53100 Siena, Italy}
   \email{chiantini@unisi.it}
\subjclass{14N05; 15A69}
\keywords{symmetric tensor rank; Veronese embedding}

\begin{abstract}
Let $\nu _d: \mathbb {P}^r \to \mathbb {P}^N$, $N:= \binom{r+d}{r}-1$, 
denote the degree $d$ Veronese embedding of $\mathbb {P}^r$.
For any $P\in \mathbb {P}^N$, the symmetric tensor rank $sr (P)$ is the minimal cardinality
of a set $S\subset \nu _d(\mathbb {P}^r)$ spanning $P$. Let
$\mathcal {S}(P)$ be the set of all $A\subset \mathbb {P}^r$ such that
$\nu _d(A)$ computes $sr (P)$. Here we classify all $P\in \mathbb {P}^n$
such that $sr (P) < 3d/2$ and $sr (P)$ is computed by at least two subsets 
of $\nu _d(\mathbb {P}^r)$. For such tensors $P\in \mathbb {P}^N$, 
we prove that $\mathcal {S}(P)$ has no isolated points.
\end{abstract}

\maketitle

\section{Introduction}\label{S1}

Let $\nu _d: \mathbb {P}^r \to \mathbb {P}^N$, $N:= \binom{r+d}{r}-1$, 
denote the degree $d$ Veronese embedding of $\mathbb {P}^r$.
Set $X_{r,d}:= \nu _d(\mathbb {P}^r)$.
For any $P\in \mathbb {P}^N$, the {\emph {symmetric rank}} or {\emph 
{symmetric tensor rank}} or, just, the {\emph {rank}} 
$sr (P)$ of $P$ is the minimal cardinality of a finite set 
$S\subset X_{r,d}$ such that $P\in \langle S\rangle$, where
$\langle \ \ \rangle$ denote the linear span.

For any $P\in \mathbb {P}^N$, let $\mathcal {S}(P)$ denote the set
of all finite subsets $A\subset \mathbb {P}^r$ such that $\nu _d(A)$
computes $sr(P)$, 
i.e. the set of all  $A\subset \mathbb {P}^r$ such that
$P\in \langle \nu _d(A)\rangle$ and $\sharp (A) = sr (P)$. Notice that  
 if $A\in \mathcal {S}(P)$, then
$P\notin \langle \nu _d(A')\rangle$ for any $A'\subsetneq A$.

The study of the sets $\mathcal {S}(P)$ has a natural role in the theory
of symmetric tensors. Indeed, if we interpret points 
$P\in \mathbb {P}^n$ as symmetric tensors, then $\mathcal {S}(P)$
is the set of all the representations of $P$ as a sum
of rank $1$ tensors. For many applications, it is crucial
to have some information about the structure of $\mathcal {S}(P)$.
We do not recall the impressive literature on the subject
(but see \cite{kb}, for a good references' repository).
The interest in the theory is growing, since applications of tensors are
actually increasing in Algebraic Statistics, and then in Biology, Chemistry
and  also Linguistics (see e.g. \cite{kb} and \cite{l}).
Let us mention one relevant aspect, from our point of view.
If we are looking for one specific decomposition of $P$ as a sum of tensors of
rank $1$, and we find some decomposition (there is a software, which tries 
heuristically to compute it), how to ensure that the found
decomposition is the expected one? Of course, if $\mathcal {S}(P)$
is a singleton, the answer is obvious. In a recent paper
(\cite{bgl}) Buczy\'{n}ski, Ginensky and Landsberg proved
that $\sharp (\mathcal {S}(P)) =1$ when the rank is small, i.e.
$sr (P) \le (d+1)/2$. This important uniqueness theorem 
(which holds more generally for $0$-dimensional schemes, see \cite{bl} Proposition 2.3) 
turns out to be sharp, even if $r=1$. For larger values of the rank, one can determine
the uniqueness of the decomposition, when
an element $A\in\mathcal{S}(P)$ satisfies some geometric properties 
(e.g. when no $3$ points of $A$ are collinear, see \cite{bb}, Theorem 2
or when $A$ is in {\it general uniform position}, see \cite{bc}).

In this paper, we describe more closely the set $\mathcal {S}(P)$, 
for tensors whose rank sits in the range  $sr(P)< 3/2$.

In particular, we show that for each $P$ with
$\sharp (\mathcal {S}(P)) >1$, the set $\mathcal {S}(P)$ has no 
isolated points. 

This result has a consequence. Assume we are given
$Q\in \mathbb {P}^n$ with $sr (Q) < 3d/2$, and we find 
$A\in \mathcal {S}(Q)$ which is isolated in $\mathcal {S}(Q)$. 
Then we can conclude that $A$ is the unique element of $\mathcal {S}(Q)$
(in other words, $Q$ is {\it identifiable}).
This means that, in the specified range, given
one decomposition $A\in \mathcal{S}(P)$, one can conclude that
$A$ is unique, just by performing an analysis
$\mathcal{S}(P)$ in a neighbourhood of $A$. This sounds to be
much easier than looking for other points of $\mathcal{S}(P)$
in the whole space.
 
Our precise statement is:
 
\begin{theorem}\label{i2}
Assume $r\ge 2$. Fix a positive integer $t<3d/2$. 
Fix $P\in \mathbb {P}^N$ such that $sr (P)=t$ and the 
symmetric rank of $P$ is computed by at least two different sets 
$A, B\subset \mathbb {P}^r$. Then
$sr (P)$ is computed by an infinite family of subsets of 
$\mathbb {P}^r$, and this family has no isolated points.
\end{theorem}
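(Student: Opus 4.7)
The plan is to convert the hypothesis $\sharp(\mathcal{S}(P))\ge 2$ into a failure of independent conditions, use this to find a line containing many points of $A\cup B$, and then apply Sylvester's classical theorem for binary forms on that line to produce a $1$-dimensional family of decompositions. Fix distinct $A,B\in\mathcal{S}(P)$ and set $Z:=A\cup B\subset\mathbb{P}^r$, so that $|Z|\le 2t\le 3d-1$. Since $\nu_d(A)$ and $\nu_d(B)$ are each linearly independent (by minimality of $A$ and $B$) but their spans share the point $P$, a Grassmann-formula computation gives $h^1(\mathbb{P}^r,\mathcal{I}_Z(d))>0$.

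The first main step is the following reduction: a reduced $0$-dimensional $Z\subset\mathbb{P}^r$ of length at most $3d-1$ with $h^1(\mathcal{I}_Z(d))>0$ must contain a line $L$ with $\ell:=|L\cap Z|\ge d+2$. I would prove this by the standard residual exact sequence
\[
0\to\mathcal{I}_{\mathrm{Res}_L(Z)}(d-1)\to\mathcal{I}_Z(d)\to\mathcal{I}_{Z\cap L,L}(d)\to 0
\]
and induction on $d$, showing that the bound $3d-1$ is tight enough to force the line case (as opposed, for instance, to all points lying on a plane conic absorbing $2d+2$ of them). I expect this lemma, and especially the careful analysis of the residual when $|W|:=|Z\setminus L|$ is still large, to be the main technical obstacle.

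Next, write $A_L:=A\cap L$, $B_L:=B\cap L$ and $W:=Z\setminus L$. Using the minimality of $A$ and $B$ again, one decomposes $P=P_L+P_W$ with $P_L\in\langle\nu_d(L)\rangle$ and $P_W\in\langle\nu_d(W)\rangle$, and both $\nu_d(A_L)$ and $\nu_d(B_L)$ compute the same rank of $P_L$ on the rational normal curve $\nu_d(L)\cong\nu_d(\mathbb{P}^1)$. Because $|A_L\cup B_L|\ge d+2$ while $|A_L|,|B_L|\le t<3d/2$, the binary form $P_L$ lies in the non-unique range of Sylvester's theorem; the classical theory then provides an algebraic $1$-parameter family $\{C_\lambda\}_{\lambda}\subset L$ of rank decompositions of $P_L$ with $C_0=A_L$. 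Replacing $A_L$ by $C_\lambda$ produces the desired infinite family $(A\setminus L)\cup C_\lambda\in\mathcal{S}(P)$ passing through $A$.

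To rule out isolated points \emph{everywhere} in $\mathcal{S}(P)$, the argument is repeated: given any $C\in\mathcal{S}(P)$, the infiniteness of $\mathcal{S}(P)$ already established lets us choose some $C'\in\mathcal{S}(P)$ with $C'\ne C$, and applying the whole construction to the pair $(C,C')$ yields a positive-dimensional family in $\mathcal{S}(P)$ through $C$. Hence no element of $\mathcal{S}(P)$ is isolated.
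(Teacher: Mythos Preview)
Your reduction in the ``first main step'' is false, and this is where the argument breaks. With $|Z|\le 3d-1$ and $h^1(\mathcal{I}_Z(d))>0$ it does \emph{not} follow that some line meets $Z$ in at least $d+2$ points: take $Z$ to be $2d+2$ points on a smooth conic $T\subset\mathbb{P}^r$. Then $h^1(\mathcal{I}_Z(d))>0$ (since $\nu_d(T)$ is a rational normal curve of degree $2d$), $|Z|=2d+2\le 3d-1$ for $d\ge 3$, yet every line meets $Z$ in at most two points. The correct dichotomy (this is the content of the cited result of Couvreur) is that either a line carries $\ge d+2$ points of $Z$ \emph{or} a conic carries $\ge 2d+2$ points of $Z$, and both alternatives genuinely occur under the hypotheses of the theorem. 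The paper treats the conic case separately: for a smooth $T$ one replaces the line by $T$ and the degree $d$ rational normal curve by the degree $2d$ curve $\nu_d(T)$, while for $T=L_1\cup L_2$ one must further handle the delicate situation $d$ odd, $\sharp(A\cap L_i)=(d+1)/2$, where neither line alone is in Sylvester's non-unique range.

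There is a second gap in your line argument. You write $P=P_L+P_W$ with $W=Z\setminus L=(A\setminus L)\cup(B\setminus L)$ and then assert that both $A_L$ and $B_L$ compute the rank of the \emph{same} point $P_L$. A priori the point of $\langle\nu_d(L)\rangle$ obtained from the splitting $A=A_L\sqcup(A\setminus L)$ need not coincide with the one obtained from $B=B_L\sqcup(B\setminus L)$. What makes this work in the paper is first proving $A\setminus L=B\setminus L$ (their Lemma~\ref{v2}), which requires the extra hypothesis $h^1(\mathcal{I}_{Z\setminus L}(d-1))=0$; when this $h^1$ is positive one finds a second line $L'$ and argues with the reducible conic $L\cup L'$ instead. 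Your sketch skips both the verification of $A\setminus L=B\setminus L$ and the alternative subcase.

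Your final paragraph---reapplying the construction to an arbitrary $C\in\mathcal{S}(P)$ once infiniteness is known---is correct and is exactly how the paper upgrades ``infinite'' to ``no isolated points''.
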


We notice that the notion of ``isolated points~'' requires an algebraic 
structure of the set $\mathcal {S}(P)$. As well-known (and checked 
in Section \ref{S2}), the set $\mathcal {S}(P)$ is constructible 
in the sense of Algebraic Geometry
(\cite{h}, Ex. II.3.18 and Ex. II.3.19). This makes more precise 
the expression ``~no isolated point~'' above (see Remark \ref{o000} 
in section \ref{S2} for the details).

We also prove that the bound $t<3d/2$, in the statement of Theorem \ref{i2}, is 
sharp. Indeed, Example \ref{i1} provides one tensors $P$ with $sr (P) =3d/2$
(so $d$ is even), and $\sharp (\mathcal {S}(P))=2$.

In the proof, it is not difficult to see that if there are
at least two elements in $\sharp (\mathcal {S}(P))=2$, when
$sr(P)<3d/2$, then the shape of the Hilbert functions of $A,B$ 
shows that both sets have a large intersection 
with either a line, or a conic of $\mathbb{P}^r$
(we will refer to \cite{bb} and \cite{c2}, for this part of the theory).
Then, we perform a (maybe tedious, but necessary) analysis
of the behaviour of sets of points, with a big intersection
with either a line or  a conic.

We also provide a deeper description of $\mathcal {S}(P)$,
still in the range $sr(P)<3/2$ and   
assuming that $\mathcal {S}(P)$ is not a singleton (hence it is infinite). 
Indeed, we have the following:

\begin{theorem}\label{i3}
Assume $r\ge 2$ and $d\ge 3$. Fix a positive integer $t<3d/2$. 
Fix $P\in \mathbb {P}^N$ such that $sr (P)=t$. Then, the set $\mathcal
{S}(P)$ is not a single point if and only if $P$ may be described 
in one of the following way:
\begin{itemize}
\item[(a)] for any $A\in \mathcal {S}(P)$, there is a line 
$D\subset \mathbb {P}^r$ such that $\sharp (A\cap D) \ge \lceil 
(d+2)/2\rceil$; set $F:= A\setminus A\cap D$; the set
$\langle \nu _d(A\cap D)\rangle \cap \langle\{P\}\cup \nu _d(F)\rangle$, 
is formed by a unique point $P_D$ and $\mathcal {S}(P_D)$ is infinite; 
for each $E\in \mathcal {S}(P_D)$ we have $E\cap F =\emptyset$
and $E\cup F \in \mathcal {S}(P)$.
\item[(b)]  for any $A\in \mathcal {S}(P)$, there is a smooth conic 
$T\subset \mathbb {P}^m$ such that $\sharp (A\cap T) \ge d+1$; 
set $F:= A\setminus A\cap T$; the set
$\langle \nu _d(A\cap T)\rangle \cap \langle \{P\}\cup F\rangle$, 
is formed by a unique point $P_T$ and $\mathcal {S}(P_T)$ is infinite; 
for each $E\in \mathcal {S}(P_T)$ we have $E\cap F =\emptyset$;
every element of $\mathcal {S}(P)$ is of the form $E'\cup F$ for some 
$E'\subset T$ computing $\mathcal {S}(P_T)$ with respect to the 
rational normal curve $\nu _d(T)$.
\item[(c)] $d$ is odd; for any $A\in \mathcal {S}(P)$, there is 
a reducible conic $T=L_1\cup L_2\subset \mathbb {P}^m$, $L_1\ne
L_2$, such that $\sharp (A\cap L_1) = \sharp (A\cap L_2)=(d+1)/2$ and 
$L_1\cap L_2\notin A$.
\end{itemize}
\end{theorem}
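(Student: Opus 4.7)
\emph{Proof plan.}

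The ``if'' direction is a direct construction. In cases (a) and (b), the hypothesis singles out a residual point $P_D$ (resp.\ $P_T$) lying on a rational normal curve of degree $d$ (resp.\ $2d$), for which the classical Sylvester-type $\mathbb{P}^1$-rank theory supplies a positive-dimensional family of minimal decompositions; concatenating each such decomposition with the fixed remainder $F$ yields an infinite subfamily of $\mathcal{S}(P)$. In case (c) one uses the one-parameter family of rank-$(d+1)/2$ decompositions on each rational normal curve $\nu_d(L_i)$ to swap the two halves of $A$ on $L_1$ and $L_2$, producing directly a continuous family in $\mathcal{S}(P)$.

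For the converse, the plan is to start from two distinct $A,B\in\mathcal{S}(P)$ and set $Z:=A\cup B$. The common linear relation realizing $P$ on both decompositions forces $h_Z(d)\le |Z|-1$, i.e.\ $Z$ fails to impose independent conditions on $|\mathcal{O}_{\mathbb{P}^r}(d)|$. Since $|Z|\le 2t<3d$, the Hilbert-function classification results of \cite{bb} and \cite{c2} then force $Z$ to concentrate on a small-degree curve $C$: either a line $D$ with $|Z\cap D|\ge d+2$, or a conic $T$ (possibly reducible) with $|Z\cap T|\ge 2d+2$. A pigeonhole argument on $Z=A\cup B$, combined with the minimality of each decomposition, transfers this concentration down to $A$ itself, producing the numerical thresholds $\lceil (d+2)/2\rceil$, $d+1$, $(d+1)/2$ recorded in (a), (b), (c). A residuation step then writes $F:=A\setminus(A\cap C)$ and extracts a well-defined point $P_C\in\langle \nu_d(A\cap C)\rangle \cap \langle\{P\}\cup \nu_d(F)\rangle$ whose rank is computed on the rational normal curve $\nu_d(C)$; the non-uniqueness of $A$ is thereby reduced to a non-uniqueness question on $\mathbb{P}^1$, where the classical theory of binary forms pins down $\mathcal{S}(P_C)$ and produces the three alternatives (a)/(b)/(c).

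The principal technical obstacle is the reducible-conic analysis together with the uniformity in $A$. On one hand, one must rule out all unbalanced distributions $|A\cap L_1|\ne|A\cap L_2|$ and the possibility that the node of $T$ belongs to $A$, via iterated use of the residual exact sequence
\[
0 \to \mathcal{I}_{\mathrm{Res}_C(Z)}(d-\deg C) \to \mathcal{I}_Z(d) \to \mathcal{I}_{Z\cap C,C}(d) \to 0
\]
combined with the strict bound $t<3d/2$; these are the ``tedious but necessary'' computations foreshadowed in the introduction. On the other hand, statements (a)--(c) are required to hold for \emph{every} $A\in\mathcal{S}(P)$, not merely for the initial pair; this uniformity is obtained by a rigidity argument showing that the distinguishing curve $C$ is intrinsic to $P$ once it exists (the residual rank on $\nu_d(C)$ is large enough to determine $C$), so any third decomposition must respect the same $C$ and the conclusion propagates across $\mathcal{S}(P)$. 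A minor subtlety, handled at the outset, is the reduction to configurations with $A\cap B=\emptyset$: when the two decompositions share points, one cancels the common contribution and falls back on the uniqueness theorem of \cite{bgl} for smaller rank.
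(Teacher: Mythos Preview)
Your plan follows the same broad strategy as the paper: $h^1(\mathcal{I}_{A\cup B}(d))>0$ forces a line or conic, one residuates to a point on a rational normal curve, and one reduces to binary forms. But two steps are genuine gaps, not mere details to fill in.

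First, the proposed ``reduction to $A\cap B=\emptyset$ by cancelling the common contribution and falling back on \cite{bgl}'' does not work. Removing the shared points of $A$ and $B$ does not produce a single well-defined point $P'$ of smaller rank for which both residuals are minimal decompositions; and in any case no such reduction is needed, since Lemma~\ref{v1} already yields $h^1(\mathcal{I}_{A\cup B}(d))>0$ regardless of overlap. This is a misconception about the structure of the argument, not a minor subtlety.

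Second, and more seriously, the tool that actually drives the proof is missing from your outline: Lemma~\ref{v2} (the splitting lemma). Pigeonhole on $\sharp((A\cup B)\cap D)\ge d+2$ only yields $\sharp(A\cap D)\ge\lceil(d+2)/2\rceil$ \emph{or} $\sharp(B\cap D)\ge\lceil(d+2)/2\rceil$, not both; to get it for the specific $A$ you fixed you need $\sharp(A\cap D)=\sharp(B\cap D)$, and that comes from Lemma~\ref{v2}'s conclusion $A\setminus(A\cap D)=B\setminus(B\cap D)$ once the residual $h^1$ vanishes. The same lemma replaces your ``rigidity argument'' in case~(b): the paper does \emph{not} show that the conic $T$ is intrinsic to $P$. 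Instead, for an arbitrary third $B\in\mathcal{S}(P)$ it reruns the line/conic dichotomy on $A\cup B$ and, in each subcase, applies Lemma~\ref{v2} (with $D$ a line, a reducible conic, or a general quadric through two lines) to force $B\setminus(B\cap T)=F$, hence $B=F\cup E'$ with $E'\subset T$. Without Lemma~\ref{v2} neither the numerical thresholds on $A$ nor the uniformity across $\mathcal{S}(P)$ goes through.

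A smaller inaccuracy: in the ``if'' direction for (c), there is no one-parameter family of rank-$(d+1)/2$ decompositions of a \emph{fixed} point on $\nu_d(L_i)$, since $(d+1)/2$ is the generic (hence uniquely realised) rank on a degree-$d$ rational normal curve when $d$ is odd. The correct construction, carried out in step~(b) of the proof of Theorem~\ref{i2}, moves a point $Q_1$ along a line $D_1\subset\langle\nu_d(L_1)\rangle$ through $\nu_d(L_1\cap L_2)$; this forces a companion $Q_2\in D_2$ with $P_T\in\langle\{Q_1,Q_2\}\rangle$, and each $(Q_1,Q_2)$ carries a \emph{unique} pair of decompositions. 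The one-parameter family is coupled across the two lines, not independent on each.
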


Let us mention that if $L$ is a linear subspace of 
dimension $m$ in $\mathbb{P}^r$,
then the Veronese embedding $\nu_d$, restricted to
$L$, can be identified with a $d$-th Veronese embedding
of $\mathbb{P}^s$. Thus, if $Q$ is a point of the linear
span $\langle \nu_d(L)\rangle$, then we can consider
the rank  of $Q$, either with respect to $X_{r,d}$, or with respect
to $X_{m,d}$. Fortunately, in our cases where 
this ambiguity could arise, \cite{bl} Corollary 2.2
will guarantee that the two ranks are equal, and 
every decomposition $A\in\mathcal {S}(Q)$, with respect to
$X_{r,d}$, is contained in $X_{m,d}$. Indeed, we have:

\begin{remark}\label{i4}
Take $P_D$ (resp. $P_T$) as in case (a) (resp. (b)) of 
Theorem \ref{i3}. By \cite{ls}, Proposition 3.1, or \cite{lt}, 
subsection 3.2, $sr (P_D)$ (resp. $sr (P_T)$) is equal to its 
symmetric rank with respect to the rational normal curve 
$\nu _d(D)$ (resp. $\nu _d(T))$. By the symmetric case of 
\cite{bl}, Corollary 2.2, each element of $\mathcal {S}(P_D)$ 
(resp. $\mathcal {S}(P_T)$) is contained in $D$ (resp. $T$). 

Several algorithms are available, to get an element of $\mathcal 
{S}(P_D)$ or $\mathcal {S}(P_T)$ (\cite{cs}, \cite{lt}, \cite{bgi}).
\end{remark}

Finally, we wish to thank J. Landsberg, who pointed out to us
the importance of studying the existence of isolated points $A\in \mathcal 
{S}(P)$, when $\mathcal {S}(P)$ is not a singleton.

\section{Preliminaries}\label{S2}

We work over an algebraically closed field $\mathbb {K}$ 
such that $\mbox{char}(\mathbb {K})=0$.
\smallskip

Recall, from the introduction, than $\nu _d: \mathbb {P}^r \to \mathbb {P}^N$, 
$N:= \binom{r+d}{r}-1$ denotes the degree $d$ Veronese embedding of $\mathbb {P}^r$.
Call $X_{r,d}$ the image of this map.

For any closed subscheme $W\subseteq \mathbb {P}^r$, let $\langle W\rangle$
denote the linear span of $W$. If $W$ sits in some hyperplane, $\langle W\rangle$
is  the intersection of all the hyperplanes of $\mathbb {P}^r$ containing $W$. 

For any integer $m>0$ and any integral, positive-dimensional subvariety 
$T\subset \mathbb {P}^r$, we  let $\Sigma _m(T)$ denote the embedded $m$-th
secant variety of $X$, i.e. the closure in $\mathbb {P}^r$ 
of the union of all $(m-1)$-dimensional linear subspaces spanned by $m$ points of $T$.
We take the closure with respect to the Zariski topology. Notice that, over 
the complex number field, the closure in the euclidean topology gives the same set.

For any integer $k>0$, let $\mbox{Hilb}^k(\mathbb {P}^r)^0$ denote the set of 
all finite ($0$--dimensional) reduced subsets of $\mathbb {P}^r$, with cardinality $k$.  
$\mbox{Hilb}^k(\mathbb {P}^r)^0$ is a smooth and quasi-projective variety of dimension $rk$.

\begin{remark}\label{o000}
We observe that the set $\mathcal {S}(P)$, defined in the
introduction,  is always constructible.

Indeed, let $G:=G(k-1,r)$ denote the Grassmannian of all $(k-1)$-dimensional 
linear subspaces of $\mathbb {P}^r$. 
For any point $P\in \mathbb {P}^r$, set $G(k-1,r)(P):= \{V\in G(k-1,r): P\in V\}$ 
and $G(k-1,r)(P)_+:= \{V\in G(k-1,r)(P):P$ is spanned by $k$ points of $V\cap X$. 
Notice that, by definition, $G(k-1,r)(P)_+ =\emptyset$ for all $k < sr (P)$
and $G(sr (P)-1,r)(P)_+ \ne \emptyset$. 
Now, put $\mathcal {J}:= \{(S,V)\in  \mbox{Hilb}^{sr (P)}(\mathbb {P}^r)^0\times G(sr (P)-1,k)(P)_+ : 
P\in \langle \nu _d(S)\rangle \}$. This set $\mathcal {J}$
is locally closed. If $\pi_1$ denotes the projection onto the first factor, then
$\mathcal {S}(P)$ is exactly the image $\pi _1(\mathcal {J})$. 
Hence, a theorem of Chevalley guarantees that $\mathcal {S}(P)$ is a constructible set 
(\cite{h}, Ex. II.3.18 and Ex. II.3.19). 

We are interested in isolated points of $\mathcal {S}(P)$. Notice that $Z$ is an
isolated point for $\mathcal {S}(P)$ when $Z$ is an irreducible component of the closure of
$\mathcal {S}(P)$. Thus, the notion of {\it isolated points} for $\mathcal {S}(P)$
are equal both if we use the Zariski or the Euclidean topology on $\mathcal {S}(P)$.
\end{remark}

\begin{remark}\label{a00}
Let $X$ be any projective scheme and $D$ any effective Cartier divisor of $X$. For any closed 
subscheme $Z$ of $X$, we denote with $\mbox{Res}_D(Z)$ the residual scheme of $Z$ with 
respect to $D$. i.e. the closed subscheme of $X$ with ideal sheaf 
$\mathcal {I}_Z:\mathcal {I}_D$ (where $\mathcal {I}_Z, \mathcal {I}_D$ are the ideal sheaves
of $Z$ and $D$, respectively). 

We have $\deg (Z) = \deg (Z\cap D) + \deg (\mbox{Res}_D(Z))$. If $Z$ is a finite reduced set, 
then $\mbox{Res}_D(Z) = Z\setminus Z\cap D$. For every $L\in \mbox{Pic}(X)$ we have the exact sequence
\begin{equation}\label{eqa1.0}
0 \to \mathcal {I}_{\mbox{Res}_D(Z)}\otimes L(-D) \to \mathcal {I}_Z\otimes L \to 
\mathcal {I}_{Z\cap D,D}\otimes (L\vert D)
\to 0
\end{equation}
From (\ref{eqa1.0}) we get
$$h^i(X,\mathcal {I}_Z\otimes L) \le h^i(X,\mathcal {I}_{\mbox{Res}_D(Z)}
\otimes L(-D))+h^i(D,\mathcal {I}_{Z\cap
D,D}\otimes (L\vert D))$$ for every integer $i\ge 0$.
\end{remark}

\section{The proofs}\label{S3}

We will make an extensive use of the following two results.

\begin{lemma}\label{v1}  Let $A,B\in \mathbb{P}^r$ 
be two zero-dimensional schemes such that $A\ne B$. 
Assume the existence of $P\in \langle \nu _d(A)\rangle \cap \langle
\nu _d(B)\rangle$ such that $P\notin \langle \nu _d(A')\rangle$ 
for any $A'\subsetneq A$ and $P\notin \langle \nu _d(B')\rangle$ for any $B'\subsetneq B$.
Then $h^1(\mathbb P^r,\mathcal {I}_{A\cup B}(d)) >0$.
\end{lemma}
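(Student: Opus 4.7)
The plan is a proof by contradiction: assume $h^1(\mathbb{P}^r, \mathcal{I}_{A\cup B}(d)) = 0$ and derive a contradiction with the minimality hypothesis on $P$. The vanishing of $h^1$ for a zero-dimensional scheme $Z$ is the standard statement that $Z$ imposes independent conditions on forms of degree $d$, equivalently $\dim \langle \nu_d(Z)\rangle = \deg(Z) - 1$. By monotonicity of $h^1$ under passing to subschemes, this equality then holds simultaneously for $Z = A \cup B$, for $A$, for $B$, and for the scheme-theoretic intersection $A \cap B$; note also that $\langle \nu_d(A)\rangle + \langle \nu_d(B)\rangle = \langle \nu_d(A\cup B)\rangle$, directly from the definition of linear span.

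Next I would compute $\dim\bigl(\langle \nu_d(A)\rangle \cap \langle \nu_d(B)\rangle\bigr)$ via Grassmann's formula in $\mathbb{P}^N$, combined with the degree identity $\deg(A\cup B) + \deg(A\cap B) = \deg A + \deg B$ coming from the Mayer--Vietoris sequence $0 \to \mathcal{O}_{A\cup B} \to \mathcal{O}_A \oplus \mathcal{O}_B \to \mathcal{O}_{A\cap B} \to 0$. A direct substitution of the four dimensional equalities above yields
\[
\dim\bigl(\langle \nu_d(A)\rangle \cap \langle \nu_d(B)\rangle\bigr) = \deg(A\cap B) - 1 = \dim\langle \nu_d(A\cap B)\rangle.
\]
Since the inclusion $\langle \nu_d(A\cap B)\rangle \subseteq \langle \nu_d(A)\rangle \cap \langle \nu_d(B)\rangle$ is automatic, the two linear spaces coincide, so $P \in \langle \nu_d(A\cap B)\rangle$. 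In the degenerate case $A\cap B = \emptyset$, the same formula gives $\dim = -1$, i.e.\ an empty intersection, contradicting the existence of $P$ at once.

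To close, I would run a short case analysis using $A \neq B$: at least one of $A \not\subseteq B$, $B \not\subseteq A$ must hold, and by symmetry I may assume $A \not\subseteq B$, so $A\cap B \subsetneq A$. Then $P \in \langle \nu_d(A\cap B)\rangle$ with $A\cap B$ a proper subscheme of $A$ directly contradicts the hypothesis $P \notin \langle \nu_d(A')\rangle$ for $A' \subsetneq A$, forcing $h^1(\mathbb{P}^r, \mathcal{I}_{A\cup B}(d)) > 0$. I do not expect a serious obstacle here: the only subtlety is to verify that the cohomological translation $h^1=0 \Leftrightarrow \dim\langle\nu_d(Z)\rangle=\deg Z -1$ and the Grassmann-plus-degree computation both remain valid when $A$ and $B$ are allowed to be non-reduced, which is exactly where the Mayer--Vietoris sequence above is used.
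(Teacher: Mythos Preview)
Your argument is correct. The paper itself does not give a proof but only cites \cite{bb}, Lemma~1, and the proof there is essentially the one you outline: from $h^1(\mathcal{I}_{A\cup B}(d))=0$ one deduces linear independence of $\nu_d(Z)$ for each of $Z=A,B,A\cap B,A\cup B$, then Grassmann's formula combined with $\deg(A\cup B)+\deg(A\cap B)=\deg A+\deg B$ forces $\langle\nu_d(A)\rangle\cap\langle\nu_d(B)\rangle=\langle\nu_d(A\cap B)\rangle$, contradicting the minimality hypothesis on $A$ (or $B$).
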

\begin{proof} See \cite{bb}, Lemma 1.
\end{proof}

The following lemma was proved (with $D$ a hyperplane) 
in \cite{bb1}, Lemma 7. The same proof works for an arbitrary 
hypersurface $D$ of $\mathbb {P}^r$.

\begin{lemma}\label{v2}
Fix positive integers $r, d, t$ such that $t\le d$ and finite sets 
$A, B\subset \mathbb {P}^r$. Assume the existence of a degree $t$ hypersurface 
$D\subset \mathbb {P}^r$ such that $h^1(\mathcal {I}_{(A\cup B)\setminus 
(A\cup B)\cap D}(d-t)) =0$. Set $F:= A\cap B \setminus (D\cap A\cap B)$. 

Then $\nu _d(F)$ is linearly independent. Moreover $\langle \nu _d(A)\rangle
\cap \langle \nu _d(B)\rangle$ is the linear span of the two supplementary subspaces 
$\langle \nu _d(F)\rangle$ and  $\langle \nu _d(A)\rangle
\cap \langle \nu _d(B)\rangle$. 

Assume there is $P\in \langle \nu _d(A)\rangle \cap \langle
\nu _d(B)\rangle$ such that $P\notin \langle \nu _d(A')\rangle$ for any $A'\subsetneq A$, 
and $P\notin \langle \nu _d(B')\rangle$ for any $B'\subsetneq B$. Then
$A = (A\cap D)\sqcup F$, $B = (B\cap D)\sqcup F$ and $A\setminus A\cap D = B\setminus B\cap D$.
\end{lemma}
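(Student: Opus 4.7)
The plan is to repeat, essentially word for word, the argument given in \cite{bb1}, Lemma~7 for $D$ a hyperplane, substituting an arbitrary degree $t$ hypersurface $D$: the only ingredient about $D$ that is used is the residual exact sequence (\ref{eqa1.0}), which is valid for any effective Cartier divisor. Thus my entire plan amounts to playing the cohomology of
\[
0\to \mathcal{I}_{\mathrm{Res}_D(Z)}(d-t)\to \mathcal{I}_Z(d)\to \mathcal{I}_{Z\cap D,D}(d)\to 0
\]
against the hypothesis $h^1(\mathcal{I}_{\mathrm{Res}_D(A\cup B)}(d-t))=0$. First I would prove the linear independence of $\nu_d(F)$: since $F=(A\cap B)\setminus D\subseteq \mathrm{Res}_D(A\cup B)$, the containment of reduced $0$-dimensional subschemes gives $h^1(\mathcal{I}_F(d-t))\le h^1(\mathcal{I}_{\mathrm{Res}_D(A\cup B)}(d-t))=0$, and the monotonicity of $h^1$ under positive twist for $0$-dimensional schemes yields $h^1(\mathcal{I}_F(d))=0$.

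Next I would establish the decomposition of $\langle\nu_d(A)\rangle\cap\langle\nu_d(B)\rangle$ as the join of the two supplementary subspaces $\langle\nu_d(F)\rangle$ and $\langle\nu_d(A\cap D)\rangle\cap\langle\nu_d(B\cap D)\rangle$. Since $\mathrm{Res}_D(Z)\subseteq \mathrm{Res}_D(A\cup B)$ for every $Z\in\{A,B,A\cup B\}$, the residual sequence gives $h^1(\mathcal{I}_Z(d))=h^1(D,\mathcal{I}_{Z\cap D,D}(d))$; combined with the standard formula $\dim\langle\nu_d(Z)\rangle=|Z|-1-h^1(\mathcal{I}_Z(d))$ and the Grassmann formula applied in $\mathbb{P}^r$ and on $D$, the projective dimensions of the two sides match. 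Supplementarity follows from $\langle\nu_d(F)\rangle\cap\langle\nu_d(D)\rangle=\emptyset$, which I would verify by observing that any degree $d$ form vanishing on $D$ factors through the defining form of $D$, giving $h^0(\mathcal{I}_{F\cup D}(d))=h^0(\mathcal{I}_F(d-t))=\binom{d-t+r}{r}-|F|$; the resulting dimension count yields an intersection of projective dimension $-1$.

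Finally, I would use the minimality hypothesis on $P$ to conclude $A\setminus(A\cap D)=F=B\setminus(B\cap D)$. Write $P=P_1+P_2$ uniquely with $P_1\in\langle\nu_d(F)\rangle$ and $P_2\in\langle\nu_d(A\cap D)\rangle\cap\langle\nu_d(B\cap D)\rangle$ via the decomposition above. Applying the same residual argument to $A$ alone yields $\langle\nu_d(A)\rangle=\langle\nu_d(A\setminus D)\rangle\oplus\langle\nu_d(A\cap D)\rangle$, so $P$ splits uniquely as $\alpha_A+\beta_A$ with $\alpha_A\in\langle\nu_d(A\setminus D)\rangle$ and $\beta_A\in\langle\nu_d(A\cap D)\rangle$; uniqueness forces $\alpha_A=P_1$, so $P_1\in\langle\nu_d(F)\rangle\subseteq\langle\nu_d(A\setminus D)\rangle$. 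Since $\nu_d(A\setminus D)$ is linearly independent (by the same $h^1$ argument applied to $A\setminus D\subseteq\mathrm{Res}_D(A\cup B)$) and contains $\nu_d(F)$, the expansion of $P_1$ in this basis is supported on $F$. If some $a_0\in(A\setminus D)\setminus F$ existed, its coefficient would be zero, and then both $\alpha_A$ and $\beta_A$ would lie in $\langle\nu_d(A\setminus\{a_0\})\rangle$, contradicting the minimality of $A$; hence $A\setminus D\subseteq F$, and together with $F\subseteq A\setminus D$ this gives $A\setminus D=F$. Swapping $A$ and $B$ yields $B\setminus D=F$, finishing the proof. The main obstacle, as I see it, is the supplementarity step: to upgrade the dimension equality to a genuine direct sum one must compute $h^0(\mathcal{I}_{F\cup D}(d))$ cleanly via factorization through the defining form of $D$, and this is where the arbitrary degree of $D$ (rather than $t=1$) has to be handled with care.
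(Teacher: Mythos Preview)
Your proposal is correct and matches the paper's own treatment exactly: the paper gives no independent proof of Lemma~\ref{v2}, but simply states that the argument of \cite{bb1}, Lemma~7 (written there for $D$ a hyperplane) goes through verbatim for an arbitrary degree~$t$ hypersurface via the residual sequence~(\ref{eqa1.0}), which is precisely what you do. Your sketch also correctly reads the evident typo in the statement (the second summand should be $\langle\nu_d(A\cap D)\rangle\cap\langle\nu_d(B\cap D)\rangle$, not $\langle\nu_d(A)\rangle\cap\langle\nu_d(B)\rangle$ again), and your handling of the supplementarity step via $h^0(\mathcal{I}_{F\cup D}(d))=h^0(\mathcal{I}_F(d-t))$ is clean and poses no real obstacle.
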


Next, we need to point out first the case
of the Veronese embeddings  $X_{1,d}$ of $\mathbb P^1$.
This (already non--trivial) case anticipates some features
 of the behaviour of the sets $\mathcal S(P)$, in higher dimension.

\begin{lemma}\label{w1}
Assume $r=1$ and hence $N =d$. Fix $P\in \mathbb {P}^d$ such that $sr (P)$ is 
computed by at least two different subsets of $X_{1,d}$. 
Then $\dim (\mathcal {S}(P)) > 0$ and $\mathcal {S}(P)$ has no isolated points.
\end{lemma}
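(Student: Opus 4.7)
The plan is to translate the problem into the classical Sylvester/apolarity picture for binary forms, which gives an explicit projective-space parametrization of $\mathcal{S}(P)$. The key feature of the case $r=1$ that one exploits is that every zero-dimensional subscheme of $\mathbb{P}^1$ is cut out by a single form, so decompositions can be read off directly from a vector space of forms.

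First I would apply Lemma \ref{v1} to the two distinct decompositions $A, B \in \mathcal{S}(P)$. Since $A\cup B$ is a reduced finite subset of $\mathbb{P}^1$, the condition $h^1(\mathbb{P}^1, \mathcal{I}_{A\cup B}(d)) > 0$ amounts simply to the inequality $|A \cup B| \geq d+2$; in particular $2t \geq d+2$, so we are in the Sylvester regime where one expects infinitely many decompositions.

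Next I would identify $\mathbb{P}^d = \mathbb{P}(S^d V^*)$ with $\dim V = 2$ and view $P$ as the class of a binary form $f$ of degree $d$. Each finite reduced $C \subset \mathbb{P}^1$ of cardinality $t$ is the zero locus of a unique (up to scalar) degree-$t$ form $g_C$, and by Macaulay apolarity the condition $P \in \langle \nu_d(C)\rangle$ is equivalent to $g_C$ lying in the degree-$t$ component $V_t := (f^\perp)_t$ of the apolar ideal of $f$. The hypothesis $A \neq B$ forces $g_A$ and $g_B$ to be non-proportional, so $\dim V_t \geq 2$; inside the pencil $\mathbb{P}(\langle g_A, g_B\rangle) \cong \mathbb{P}^1$ the general member factors into $t$ distinct linear forms, because both $g_A$ and $g_B$ do and having only simple roots is a Zariski-open condition on $\mathbb{P}(V_t)$.

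Finally I would observe that the whole of $\mathcal{S}(P)$ is naturally identified with the Zariski-open subset $U \subset \mathbb{P}(V_t)$ consisting of forms with $t$ simple roots: the map $[g] \mapsto Z(g)$ is a bijection onto $\mathcal{S}(P)$, with minimality automatic from $|Z(g)| = t = sr(P)$. This open subset contains $[g_A]$, so it is a nonempty open subset of the irreducible projective space $\mathbb{P}(V_t)$ of dimension $\dim V_t - 1 \geq 1$; hence $\mathcal{S}(P)$ is irreducible of positive dimension and has no isolated points. The main technical obstacle is to justify the apolarity equivalence ``$P \in \langle \nu_d(C)\rangle \iff g_C \in f^\perp$'' inside the paper's framework; the cleanest substitute, avoiding apolarity language altogether, is to consider directly the pencil of divisors cut out on $\mathbb{P}^1$ by $\lambda g_A + \mu g_B$ and verify, by an explicit linear-combination argument in $\mathbb{P}^d$ using that $\nu_d(A\cup B)$ spans only a proper subspace (by step one), that each reduced member of the pencil gives an element of $\mathcal{S}(P)$.
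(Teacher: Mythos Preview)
Your proof is correct and takes a genuinely different route from the paper's. The paper argues by a case split on the border rank $b$ of $P$: when $sr(P)=b=(d+2)/2$ it runs a fiber-dimension count on the abstract secant variety (the incidence correspondence over $\mathbb{P}^d$); when $sr(P)>b$ it invokes Sylvester's theorem ($sr(P)=d+2-b$), projects from a suitable subset $E\subset A$ onto a smaller rational normal curve $X_{1,2b-2}$, and reduces to the first case. Your argument instead identifies $\mathcal{S}(P)$ globally, via apolarity, with the open locus of squarefree forms inside the linear system $\mathbb{P}\bigl((f^\perp)_t\bigr)$; the hypothesis $A\ne B$ forces $\dim (f^\perp)_t\ge 2$, and irreducibility of projective space finishes the proof.

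What your approach buys: it is shorter, avoids the case split and the projection machinery, and in fact proves more --- $\mathcal{S}(P)$ is \emph{irreducible}, not merely positive-dimensional without isolated points. What the paper's approach buys: it stays entirely within the secant-variety/projection toolkit already in play for the higher-dimensional results, so the argument in case (ii) is a rehearsal of the reductions used later in Theorems~\ref{i2} and~\ref{i3}. Your closing hedge is unnecessary: the equivalence $P\in\langle\nu_d(C)\rangle \Leftrightarrow g_C\in (f^\perp)_t$ for reduced $C\subset\mathbb{P}^1$ is the classical apolarity lemma for binary forms (it is exactly what underlies the Sylvester algorithm cited in \cite{cs}, \cite{lt}, \cite{bgi}) and needs no independent justification here. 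The alternative pencil argument you sketch also works, but it is strictly weaker than the full description of $\mathcal{S}(P)$ you already have via $\mathbb{P}\bigl((f^\perp)_t\bigr)$.
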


\begin{proof}
Let $t$ be the border rank of $P$, i.e. the minimal integer  such that $P$ sits in
the secant variety $\Sigma _t(X_{1,d})$. The dimension of secant varieties of
irreducible curve is well known (\cite{a}, Remark 1.6), and it turns out that 
$t \le \lfloor (d+2)/2\rfloor$. Take $A, B$ computing $sr (P)$ and such that 
$A\ne B$. Lemma \ref{v1} gives $h^1(\mathcal {I}_{A\cup B}(d)) >0$. 
Since any set of at most $d+1$ points is separated by divisors of degree $d$, we 
see that $\sharp (A\cup B) \ge d+2$. Hence $\sharp (A) =\sharp (B)\ge t$
and equality holds only if $t =(d+2)/2$ and $A\cap B=\emptyset$.

\quad (i) First assume $t =(d+2)/2$, so that, as we observed above, $t$
is also the symmetric rank of $P$.
In this case, by \cite{a}, Remark 1.6, a standard dimensional count
proves that $\Sigma _{t}(X_{1,d}) = \mathbb {P}^d$.
Moreover, $(\mathcal {S}(Q))$ can be described as the fiber of a natural
proper map of varieties. Namely, let $G(t-1,d)$ denotes the Grassmannian 
of $(t-1)$-dimensional linear subspaces of $\mathbb {P}^d$. 
Let $\mathbb {I} := \{(O,V)\in \mathbb {P}^d\times G(t-1,d): O\in V\}$ 
denote the incidence correspondence, and $\pi _1$, $\pi _2$ denote the morphisms induced
from the projections to the two factors. 
Since $X_{1,d}$ is a rational normal curve, of degree $d$, notice that
$\dim (\langle W\rangle )=t-1$ for {\it every} $W\in \mbox{Hilb}^t(X_{1,d})$. 
Thus, the  map $Z\mapsto \langle Z\rangle$ defines
a proper morphism $\phi :  \mbox{Hilb}^t(X_{1,d})\to G(t-1,d)$. 
Set $\Phi := \pi _2^{-1}(\phi (\mbox{Hilb}^t(X_{1,d})))$. 
By construction, $\mathcal {S}(P)$ corresponds to the fiber of the map 
$\pi_{1|Phi} : \Phi \to \mathbb {P}^d$ over $P$. 
$\Phi$ (the {\it abstract secant variety}) is an integral variety of
dimension $\dim (\Phi )=d+1$ (\cite{a}). 
Since $\psi$ is proper and $\Phi$ is integral, every fiber of $\pi_{1|Phi}$
has dimension at least $1$ and no isolated points (\cite{h}, Ex. II.3.22 (d)). 
Thus, the claim holds, in this case.

\quad (ii) Now assume $d \ge 2t-1$. Hence $t< sr (P)$. A theorem of Sylvester 
(see \cite{cs}, or \cite{lt}, Theorem 4.1) proves that, in this case, 
$sr (P) = d+2 -t$. Moreover, by \cite{lt} \S 4, there is a unique 
zero-dimensional scheme $Z\subset \mathbb {P}^1$ such that $\deg (Z)=t$ and 
$P\in \langle \nu _d(Z)\rangle$. As $t<sr(P)$, this subscheme $Z$ cannot
be reduced.
 
Fix any $A\in \mathcal {S}(P)$. Since $h^1(\mathcal {I}_{A\cup Z}(d))>0$ (Lemma \ref{v1})
and $\deg (A)+\deg (Z) =d+2$, we have $Z\cap A = \emptyset$. Fix any $E\subset A$ such that  
$d-\sharp (E) = 2t-2$. Let $Y_E\subset \mathbb {P}^{2t-2}$ be the image of $X_{1,d}$ 
under the projection $\pi _E$ from the linear subspace $\langle \nu _d(E)\rangle$. 
Notice that $Y_E$ is again a rational normal curve, of degree $2t-2$, so that it coincides, 
up to a projectivity, with $X_{1,2t-2}$. 
 
We have $Z\cap E = \emptyset$. Moreover $\deg (Z)+\sharp (E) \le d+1$, so that, by the
properties of the rational normal curve mentioned above, the set $\nu_d(Z)\cup\nu_d(E)$
is linearly independent. It follows $\langle \nu _d(Z)\rangle\cap \langle \nu _d(E)\rangle = \emptyset$.
Hence $\pi _E$ is a morphism  at each point of $\langle \nu _d(Z)\rangle$ and maps it isomorphically 
onto a $(t-1)$-dimensional linear subspace of $\mathbb {P}^{2t-2}$.  As $\deg (A) \le d+1$,
for the same reason we also have $\langle \nu _d(A\setminus E)\rangle \cap \langle \nu _d(E)\rangle 
= \emptyset$.  It follows that  the symmetric rank of $\pi_E(P)$ (with respect to $Y_E$) 
is exactly $t$, and $\pi _E(\nu _d(A\setminus E))$ is one of the elements
of the set $\mathcal {S}(\pi _E(P))$. Moreover, for any $U\in \mathcal {S}(\pi _E(P))$ 
the set $U\cup E$ computes $sr (P)$. We saw above that  $\pi _E(\nu _d(A\setminus E))$ 
is not an isolated  element of $\mathcal {S}(\pi _E(P))$. Thus $A$ is not an isolated 
element of $\mathcal {S}(P)$.
\end{proof}

\vspace{0.3cm}

Now, we are ready to prove our first main result.
\smallskip

\qquad {\emph {Proof of Theorem \ref{i2}.}} 
Since $A\ne B$, Lemma \ref{v1}  gives $h^1(\mathcal {I}_{A\cup B}(d)) >0$. 
Then, since $\sharp (A\cup B) \le 2t < 3d$, 
one of the following cases occurs (\cite{c2}, Th. 3.8):
\begin{itemize}
\item[(i)] there is a line $D\subset \mathbb {P}^r$ such that $\sharp (D\cap (A\cup B))\ge d+2$;
\item[(ii)] there is a conic $T\subset \mathbb {P}^r$ such that $\sharp (T\cap (A\cup B))\ge 2d+2$.
\end{itemize}
We will proof the statement, by showing that Lemma \ref{w1} implies that  
we can move the points of $A\cap D$ (in case (i)), or $A\cap T$ (in case (ii)), 
in a continuous family, whose elements, together with $A\setminus (A\cap D)$,
determine a non trivial family of sets in $\mathcal {S}(P)$, which generalizes $A$.    

\quad (a) In this step, we assume the existence of a line $D\subset \mathbb {P}^r$ such that 
$\sharp (D\cap (A\cup B))\ge d+2$. 

Set $F:= A\setminus (A\cap D)$. Let $H\subset \mathbb {P}^r$ be a general hyperplane containing $D$. 
Since $A\cup B$ is finite and $H$ is general, we have have $(A\cup B)\cap H = (A\cup B)\cap D$. 

First assume  $h^1(\mathcal {I}_{(A\cup B)\setminus (A\cup B)\cap D}(d-1))=0$.
Lemma \ref{v2} gives $A\setminus (A\cap D) = B\setminus (B\cap D)$. Hence $\sharp (A\cap D) = 
\sharp (B\cap D)$ and $A\cap D \ne B\cap D$, since $A\ne B$.
The Grassmann's formula shows that $\langle \nu _d(A)\rangle \cap 
\langle \nu _d(B)\rangle$ is the linear span of its (supplementary) subspaces 
$\langle \nu _d(A\setminus (A\cap D))\rangle$ and $\langle \nu _d(A\cap D)\rangle \cap 
\langle \nu _d(B\cap D)\rangle$. This means that one can find a point $P_D\in \langle \nu _d(A\cap D)\rangle 
\cap \langle \nu _d(B\cap D)\rangle$ such that $P\in \langle \{P_D\}\cup 
\nu _d(A\setminus A\cap D)\rangle = \langle \{P_D\}\cup 
\nu _d(F)\rangle$. We notice that $A\cap D$ and $B\cap D$ are two different
subsets of the rational normal curve $\nu_d(D)$, and they computes the rank of $P_D$, 
with respect to $\nu_d(D)=X$ (which can be identified with $X_{1,d}$, see
the Introduction). Indeed, if $P_D$ belongs to the span of a subset $Z$
of $\nu_d(D)$, with cardinality smaller than $A\cap D$, then $P$ would belong
to the span of the subset $\nu_d(F)\cup Z$, of cardinality smaller than $sr(P)$, a
contradiction. By Lemma \ref{w1}, $A\cap D$ is not an isolated point of $\mathcal{S}(P_D)$. 

\quad {\emph {Claim 1:}} Fix any $E\in \mathcal {S}(P_D)$. Then 
$sr (P) = \sharp (F) +sr (P_D)$ 
and $E\cup F\in \mathcal {S}(P)$.

\quad {\emph {Proof of Claim 1:}} Notice that, by the symmetric case of \cite{bl}, 
Corollary 2.2 (see also Remark \ref{i4}), 
every  element of $\mathcal {S}(P_D)$ is contained in $D$ and 
in particular it is disjoint from $F$. 
Since $P_D\in \langle \nu _d(E)\rangle$ and  $P\in \langle \{P_D\}\cup \nu _d(F)$, 
we have $P\in \langle \nu_d(E\cup F)\rangle $. 
Hence, to prove Claim 1 it is sufficient to prove $\sharp (E\cup F) \le sr (P)$. 
Since $F\cap D = \emptyset$, we have $\sharp (E\cup F) = sr (P) +sr (P_D) -\sharp (A\cap D)$. 
Since $P_D\in \langle \nu _d(A\cap D)\rangle$, we have $\sharp (A\cap D) \ge sr (P_D)$ 
by the definition of $sr (P_D)$, concluding the proof of Claim 1.

Claim 1 implies that $A$ is not an isolated 
point of $\mathcal {S}(P)$. Namely, let $\Delta$ be an integral affine curve and $o\in \Delta$
such that there is $\{\alpha _\lambda \}_{\lambda \in \Delta} \subseteq \mathcal {S}(P_D)$ 
with $\alpha _o = A\cap D$ and $\alpha _\lambda \subset D$ for all $\lambda \in \Delta$ 
(Lemma \ref{w1}). By Claim 1, we have $F\cup \alpha _\lambda \in \mathcal {S}(P)$ for
all $\lambda \in \Delta$. 

Now assume $h^1(\mathcal {I}_{(A\cup B)\setminus (A\cup B)\cap D}(d-1))>0$. Since 
$\sharp ((A\cup B)\setminus (A\cup B)\cap D) \le 2d-2\le 2d-1$, again there is a line
$L\subset \mathbb {P}^m$ such that $\sharp (L\cap ((A\cup B)\setminus (A\cup B)\cap D)) \ge d+1$. 
Let $H_2\subset \mathbb {P}^m$ be a general quadric hypersurface containing $D\cup L$ 
(it exists, because if $L\cap D =\emptyset$, then $r\ge 3$). Since $L\cup D$ is the base locus
of the linear system $\vert \mathcal {I}_{L\cup D}(2)\vert$, $A\cup B$ is finite and 
$H_2$ is general in  $\vert \mathcal {I}_{L\cup D}(2)\vert$,
we have $H_2\cap (A\cup B) = (L\cup D)\cap (A\cup B)$. By Lemma \ref{v2}, $A\setminus (A\cap 
(D\cup L)) = B\setminus (B\cap (D\cup L))$. Since $\sharp ((A\cup B)\setminus (A\cup B)\cap H_2) 
\le 3d-2d-3 \le d-1$, we have $h^1(\mathcal {I}_{(A\cup B)\setminus (A\cup B)\cap H_2}(d-2)) =0$. 
Lemma \ref{w1} gives $A\setminus (A\cap (D\cup L)) = B\setminus (B\cap (D\cup L))$. 
Notice that either $\sharp (A\cap L) \ge (d+2)/2$, or $\sharp (B\cap L) \ge  
(d+2)/2$, since $\sharp ((A\cup B)\cap (D\cup L)) \ge 2d+3$ and $\sharp (A\cap (D\cup L)) 
= \sharp (B\cap (D\cup L))$.

Assume $x:= \sharp (A\cap L) \ge (d+2)/2$. Since $P\in \langle \nu _d(A)\rangle$
and $P\notin \langle \nu _d(A')\rangle$ for any $A'\subsetneq A$, the set $\langle \{P\} \cup 
\nu _d(A\setminus A\cap L)\rangle \cap \langle \nu _d(A\cap L)\rangle$ is a single point. 
Call $P_{L,A}$ this point. Since $A$ computes $sr(P)$, we see that 
$A\cap L$ computes the rank of $P_{L,A}$, with respect to the rational normal curve $\nu_d(L)$.  
Since $2x+1> d$, as explained in the proof of  Lemma \ref{w1}, $A\cap L$ is not an isolated 
point of $\mathcal{S}(P_{L,A})$ (w.r.t. $\nu_d(L)$). On the other hand, as in Claim 1,
adding $A\setminus (A\cap L)$ to a sets in $\mathcal {S}(P_{L,A})$
we obtain sets in $\mathcal{S}(P)$. As above, this implies that $A$ is not an isolated 
point of $\mathcal {S}(P)$. 

In the same way we conclude if $\sharp (B\cap D) \ge (d+2)/2$.

\quad (b) Here we assume the non-existence of a line $D\subset \mathbb {P}^m$ such that 
$\sharp (D\cap (A\cup B))\ge d+2$. Hence there is a conic $T\subset \mathbb {P}^m$ 
such that $\sharp (T\cap (A\cup B))\ge 2d+2$. 

Since $A$ computes $sr (P)$, the set 
$\langle \{P\}\cup \nu _d(A\setminus A\cap T)\rangle \cap \langle \nu _d(A\cap T)\rangle$ 
is a single point. Call this point $P_T$. Let $H_2$ be a general element of 
$\vert \mathcal {I}_T(2)\vert$. Since $\mathcal {I}_T(2)$ is spanned outside $T$ and 
$A\cup B$ is finite, we have $H_2\cap (A\cup B) = T\cap (A\cup B)$. Since $\sharp (A\cup B)
-\sharp ((A\cup B)\cap T) \le d-2\le d-1$, we have $h^1(\mathcal {I}_{A\cup B\setminus (A\cup B)
\cap H_2}(d-2))=0$. Lemma \ref{w1} gives $A\setminus A\cap T = B\setminus B\cap T$.

First assume that $T$ is a smooth conic. Hence $\nu _d(T)$ is a rational normal curve
of degree $2d$. In this case, the conclusion follows
by repeating the proof of the case 
$h^1(\mathcal {I}_{(A\cup B)\setminus (A\cup B)
\cap D}(d-1))=0$ of step (a), including Claim 1, with $\nu _d(T)$
instead of $\nu _d(D)$, and applying
Lemma  \ref{w1} for the integer $2d$. 

Now assume that $T$ is singular. Since $A\cup B$ 
is reduced, we may find $T$ as above which is not a double
line, say $T = L_1\cup L_2$ with $L_1\ne L_2$. Since $\sharp ((A\cup B)\cap T)\ge 2d+2$ 
and $\sharp ((A\cup B)\cap R)\le d+1$ for every line $R$, we have $\sharp ((A\cup B)\cap 
L_1)=\sharp ((A\cup B)\cap L_2) =d+1$ and $L_1\cap L_2\notin (A\cup B)$. If either
$\sharp (A\cap L_i) = \geq (d+2)/2$ or $\sharp (B\cap L_i) \geq (d+1)/2$ for some $i$, we may repeat 
the proof of the case $h^1(\mathcal {I}_{(A\cup B)\setminus (A\cup B)\cap D}(d-1))>0$ taking 
$L_1\cup L_2$ instead of $L\cup D$.

Thus, it remains to consider the case where $d$ is odd and $\sharp (A\cap L_i) = \sharp (B
\cap L_i) =(d+1)/2$ for all $i$. 
Set $\{O\}:= L_1\cap L_2$. Since $\langle \nu _d(L_1)
\rangle \cap \langle \nu _d(L_2)\rangle = \{\nu _d(O)\}$
and $P\notin \langle \nu _d(L_i)\rangle$, $i=1,2$, the linear space $\langle \nu _d(L_i)\rangle
\cap \langle \{\nu _d(P_T)\}\cup \nu _d(L_{2-i})\rangle$ is a line $D_i \subset 
\langle \nu _d(L_i)\rangle$ passing through $\nu _d(O)$. The set $\langle \nu _d(A\cap L_i)
\rangle \cap D_i$ is a point $P_{A,i}\in D_i\setminus\{\nu _d(O)\}$. Notice
that $\langle D_1\cup D_2\rangle$ is a plane and $P_T\in \langle D_1\cup D_2\rangle 
\setminus (D_1\cup D_2)$. Hence for each $U_1\in D_1\setminus \{\nu _d(O)\}$ there is a 
unique $U_2\in D_2\setminus \{O\}$ such that $P_T\in \langle \{U_1,U_2\}\rangle$. 
By construction, $P_{L_i,A}$ has symmetric tensor 
rank $sr _{L_i}(P_{L_i,A}) =(d+1)/2$ with respect to the rational normal
curve $\nu _d(L_i)$ (\cite{lt}, Theorem 4.1 or \cite{bgi}, \S 3) (we also have 
$sr (P) = (d+1)/2$, by \cite{ls}, Proposition 3.1). The non-empty
open subset $\langle \nu _d(L_i)\rangle \setminus \Sigma _{(d-1)/2}(\nu _d(L_i))$ of 
$\langle \nu _d(L_i)\rangle$ is the set of all $Q\in \langle \nu _d(L_i)\rangle$
whose symmetric rank with respect to $v_d(L_i)$ is exactly $sr _{L_i}(Q) = (d+1)/2$. 
Since $h^1(\mathbb {P}^1,\mathcal {I}_E(d)) =0$ for
every set $E\subset \mathbb {P}^1$ such that $\sharp (E)\le d+1$, for every
$Q\in \langle \nu _d(L_i)\rangle \setminus \Sigma _{(d-1)/2}(\nu _d(L_i))$ there is a 
unique $A_{i,Q} \subset L_i$ such that $\nu _d(A_{i,Q})$ computes $sr _{L_i}(P)$.
Set $\mathcal {U}_i:= \langle \nu _d(L_i)\rangle \setminus \Sigma _{(d-1)/2}(\nu _d(L_i)) 
\cap D_i$. For each $Q_1\in D_1\cap \nu _d(L_i)\rangle \setminus \Sigma _{(d-1)/2}(\nu _d(L_i)$,
call $Q_2$ the only point
of $D_2\setminus \{O\}$ such that $P\in \langle \{Q_1,Q_2\}\rangle$. 
By moving $Q_1 in D_1$, we find an integral one-dimensional
variety $\Delta := \{F\cup A_{L_1,Q_1}\cup A_{L_2,Q_2}\} \subseteq \mathcal {S}(P)$ 
with $A\in \Delta$. Hence $A$ is not an isolated point of 
$\mathcal {S}(P)$.\qed

\medskip

The following example shows the bound $sr (P) < 3d/2$ 
in the statement of Theorem \ref{i2} is sharp, for large $d$.

\begin{example}\label{i1}
Fix an even integer $d \ge 6$. Assume $m \ge 2$. Here we construct 
$P\in \mathbb {P}^n$ such that $sr (P) =3d/2$ and its symmetric rank is computed
by exactly two subsets of $X_{m,d}$ 

Fix a $2$-dimensional linear subspace $M\subseteq \mathbb {P}^r$ 
and a smooth plane cubic $C\subset M$. Since $h^1(M,\mathcal {I}_C(d)) = 
h^1(M,\mathcal {O}_M(d-3)) =0$, we have $\deg (\nu _d(C)) =3d$, $\dim (\langle \nu _d(C)\rangle ) 
=3d-1$ and $\nu _d(C)$ is a linearly normal elliptic curve of $\langle \nu _d(C)\rangle$. 
Since no non-degenerate curve is defective (\cite{a}, Remark 1.6), we have $\Sigma _{3d/2}(\nu _d(C))
=\langle \nu _d(C)\rangle$ and $\Sigma _{3d/2}(\nu _d(C))\setminus \Sigma _{(3d-2)/2}(\nu _d(C))$ 
is a non-empty open subset of the secant variety $\Sigma _{3d/2}(\nu _d(C))$. 
Fix a general $P\in \Sigma _{3d/2}(\nu _d(C))$. 
Since $\nu _d(C)$ is not a rational normal curve, by \cite{cc1}, Theorem 3.1 and \cite{cc1}, Proposition 5.2, there are exactly $2$ (reduced) subsets of $\nu_d(C)$, 
of cardinality $3d/2$, which compute the symmetric
rank of $P$. Thus, to settle the example, it is sufficient to prove that 
any $B\subset \mathbb {P}^m$  such that $\nu _d(B)$ computes $sr (P)$, is 
a subset of $C$. Obviously $\sharp (B) \le 3d/2$.

Assume $B\nsubseteq C$. Let $H_3$ be a general cubic hypersurface containing
$C$ (hence $H_3 = C$ if $r=2$). Set $B':= B\setminus B\cap C$. Since $B$ is finite and $H_3$ is general, 
we have $B\cap H_3 = B\cap C$. Since $A\subset C$, we have $B' = (A\cup B)\setminus (A\cup B)\cap C$. 
Lemma \ref{v1} gives $h^1(\mathcal {I}_{A\cup B}(d)) >0$. Hence $h^1(M,\mathcal {I}_{A\cup B}(d)) > 0$. 
Remark \ref{a00} gives that either $h^1(C,\mathcal {I}_{(A\cup B)\cap C}(d)) > 0$ 
or $h^1(\mathcal {I}_{B'}(d-3)) > 0$.

\quad (a) First assume $h^1(\mathcal {I}_{B'}(d-3)) > 0$. Since $d\ge 3$ and $\sharp (B') \le 2d-1$, 
there is a line $D\subset M$ such that $\sharp (D\cap B') \ge d-1$
(see \cite{bgi}, Lemma 34, or \cite{c2}, Th. 3.8). Since $\nu _d(B)$
is linearly independent, we have $\sharp (D\cap B)\le d+1$. 

Assume $\sharp (D\cap (A\cup B)) \le d+1$. Hence $h^1(D,\mathcal {I}_{(A\cup B)\cap D}(d))=0$. 
Remark \ref{a00} gives $h^1(M,\mathcal {I}_{(A\cup B)\setminus (A\cup B)\cap D}(d-1)) >0$. 
Set $F:= (A\cup B)\setminus ((A\cup B)\cap D)$. We easily compute $\sharp (F) < 3(d-1)$. 
By \cite{c2}, Theorem 3.8, we get that either there is a line $D_1$ such that $\sharp (F\cap D_1)\ge d+1$ 
or there is a conic $D_2 $ such that $\sharp (D_2\cap F) \ge 2d$.  
As $P\in \Sigma _{3d/2}(\nu _d(C))$ is general, then also $A$ is general in $C$ (hence reduced). 
Thus, no $3$ of its points are collinear  and no $6$ of its points are contained in a conic.
Hence if $D_1$ exists, we get $\sharp (B) \ge 2d-2$, while if $D_2$ exists, we get $\sharp (B)  
\ge d-1 +(2d-5) =3d-6$; both lead to a contradiction, because $d\ge 6$ and $\sharp (B) =3d/2$. 

Now assume $\sharp (D\cap (A\cup B)) \ge d+2$. Let $H\subset \mathbb {P}^m$ be a general
hyperplane containing $D$. Since $A\cup B$ is finite and $H$ is general, we have 
$H\cap (A\cup B) = D\cap (A\cup B)$. If $h^1(\mathcal {I}_{(A\cup B)\setminus (A\cup B)\cap H}(d-1)) =0$, 
then Lemma \ref{v2} gives $B\setminus B\cap D = A\setminus A\cap D$. Hence $\sharp (A\cap D) = 
\sharp (B\cap D)$. Since $\sharp (A\cap D) \le 2$, we get $d\le 2$, a contradiction.
Now assume $h^1(\mathcal {I}_{(A\cup B)\setminus (A\cup B)\cap H}(d-1))>0$. Since 
$\sharp ((A\cup B)\setminus (A\cup B)\cap H) \le  2d-2$, there is a line $L\subset \mathbb {P}^m$
such that $\sharp (L\cap (A\cup B)\setminus ((A\cup B)\cap D)) \ge d+1$. Let $H_2\subset \mathbb {P}^m$ 
be a general quadric hypersurface containing $L\cup D$. As usual, since $A\cup B$ is finite, 
$L\cup D$ is the base locus of the linear system $\vert \mathcal {I}_{L\cup D}(2)\vert$ 
and $H_2$ is general in  $\vert \mathcal {I}_{L\cup D}(2)\vert$, we have 
$H_2\cap (A\cup B) = (L\cup D)\cap (A\cup B)$. 
Since $\sharp ((A\cup B)\setminus (A\cup B)\cap H_2) \le d-3$,
we have $h^1(\mathcal {I}_{(A\cup B)\setminus (A\cup B)\cap H_2}(d-2)) =0$. Hence 
Lemma \ref{v2} gives $A\setminus A\cap H = B\setminus B\cap H$. Hence $\sharp ((A\cap (L\cup D)) = 
\sharp (B\cap (L\cup D))$. This is absurd, because $d\ge 4$ while, by generality, 
no $6$ points of $A$ are on a conic.

\quad (b) Assume $h^1(C,\mathcal {I}_{(A\cup B)\cap C}(d)) > 0$ and $h^1(\mathcal {I}_{B'}(d-3))=0$. 
Since $C$ is a smooth elliptic curve and $\deg (\mathcal {O}_C(d)) =3d$,
either $\deg ((A\cup B)\cap C) \ge 3d+1$ or $\deg ((A\cup B)\cap C) = 3d$ and 
$\mathcal {O}_C((A\cup B)\cap C) \cong \mathcal {O}_C(d)$. Hence $\sharp (B\cap C) \ge (3d-1)/2$. 
Therefore $\sharp (B') \le 2$. Taking $D:= C$ in Lemma \ref{v2} we get $B'=\emptyset$, 
because $A\subset C$.
\end{example}

\vspace{0.3cm}

Next, we prove Theorem \ref{i3}, a more precise description of the positive dimensional
components of $\mathcal{S}(P)$,  when $sr(P)<3d/2$. 

\qquad {\emph {Proof of Theorem \ref{i3}.}} Fix $A\in \mathcal {S}(P)$. 
and assume the existence of $B\in \mathcal {S}(P)$ such that $B\ne A$. 
At the beginning of the proof of Theorem \ref{i2}
we showed that either:
\begin{itemize}
\item[(i)] there is a line $D\subset \mathbb {P}^r$ such that $\sharp (D\cap (A\cup B))\ge d+2$;
\item[(ii)] there is a conic $T\subset \mathbb {P}^r$ such that $\sharp (T\cap (A\cup B))\ge 2d+2$.
\end{itemize}

\quad (i) Here we assume the existence of a line $D\subset  \mathbb {P}^r$ such that 
$\sharp ((A\cup B)\cap D) \ge d+2$. We proved in step (a) of the proof of Theorem \ref{i2} 
that $\sharp (A\cap D) = \sharp (B\cap D)$. Hence $\sharp (A\cap D) \ge \lceil (d+2)/2\rceil$.
Set $F:= A\setminus A\cap D$. Since $P\in \langle \nu _d(A)\rangle$ and 
$P\notin \langle \nu _d(A')\rangle$ for any $A'\subsetneq A$, the set $\langle 
\nu _d(A\cap D)\rangle \cap \langle \{P\}\cup \nu _d(F)\rangle$ is a single point. Let $P_D$ denote
this point. Lemma \ref{w1} and the symmetric case of \cite{bl}, Corollary 2, give that
$\mathcal {S}(P_D)$ is infinite and each element of it is contained in $D$. 
Thus, to prove that we are in case (a) of the statement, it is sufficient to prove that 
$E\cup F\in \mathcal {S}(P)$ for any $E\in \mathcal {S}(P_D)$. 
This assertion is just Claim 1 of the proof of Theorem \ref{i2}.

\quad (ii) Now assume the non-existence of a line $D$ as above. Then, there is a (reduced) conic
$T\subset  \mathbb {P}^r$ such that $\sharp (T\cap (A\cup B)) \ge 2d+2$ and 
$A\setminus A\cap T = B\setminus B\cap T$.
Hence $\sharp (A\cap T) = \sharp (B\cap T) \ge d+1$. We consider separately the cases in which
$T$ is smooth or $T$ is singular.

\qquad (ii.1) Assume $T$ is smooth. Set $F:= A\setminus A\cap T$. As in step (i), 
we see that $\langle \nu _d(A\cap D)\rangle \cap \langle \{P\}\cup \nu _d(F)\rangle$ is a single point,
$P_T$. Moreover, we see that $\sharp (A\cap T) = sr (P_T)$ and $\mathcal {S}(P_T)$ is infinite,
since $\{F\cup E\}_{E\in \mathcal {S}(P_T)}\subseteq \mathcal {S}(P)$. To conclude that we are in case (b),
we need to prove that every element of $\mathcal {S}(P)$ is of the form $F\cup E$, 
$E\in \mathcal {S}(P_T)$. Fix any $B\in \mathcal {S}(P)$ such that $B \ne A$. Since $\sharp (A\cup B)<3d$ 
and $h^1(\mathcal {I}_{A\cup B}(d)) >0$, either there is a line $D_1$ such that 
$\sharp ((A\cup B)\cap D_1) \ge d+2$, or there is a reduced conic $T_2\neq T$ such that 
$\sharp ((A\cup B)\cap T_2) \ge 2d+2$ (\cite{c2}, Theorem 3.8). 

Assume the existence of the line $D_1$. If $h^1(\mathcal {I}_{(A\cup B)\setminus 
((A\cup B)\cap D_1)}(d-1)) =0$, then Lemma \ref{v2} gives $A\setminus
A\cap D_1 = B\setminus B\cap D_1$. Since
$\sharp (A) = sr (P) = \sharp (B)$, we get $\sharp (A\cap D_1)=\sharp (B\cap D_1)
\geq (d+2)/2$, which contradicts the fact that we are not in case (i).
Therefore $h^1(\mathcal {I}_{(A\cup B)\setminus
(A\cup B)\cap D_1}(d-1))>0$. Hence there is a line $D_2$ such that $\sharp (D_2\cap ((A\cup B)\setminus
(A\cup B)\cap D_1))\ge d+1$. Let $H_2$ be a general quadric hypersurface containing $D_1\cup D_2$ 
(it exists, because if $D_1\cap D_2 =\emptyset$, then $m\ge 3$). 
Since $\sharp ((A\cup B)\setminus (A\cup B)\cap H_2)\le (3d-1)-2d-3
\le d-1$, we have $h^1(\mathcal {I}_{(A\cup B)\setminus (A\cup B)\cap H_2}(d-2))=0$. 
Hence Lemma \ref{v2} implies $A\setminus A\cap H_2 = B\setminus B\cap H_2$. Since 
$H_2$ be a general quadric hypersurface containing $D_1\cup D_2$, we have 
$A\cap H_2 = A\cap (D_1\cup D_2)$ and $B\cap H_2 = B\cap (D_1\cup D_2)$. Since
$T\cap (D_1\cup D_2) \le 4$, we get $2d+3 \le \sharp ((A\cup B)\cap (D_1\cup D_2)) \le 8$, 
contradicting the assumption $d \ge 3$.

Assume the existence of the conic $T_2$ and assume $T\neq T_2$. In step (ii) of the proof of Theorem \ref{i2}, 
we proved that $A\setminus T_2\cap A = B\setminus T_2\cap B$. Since $\sharp (A) = sr (P) =\sharp (B)$, 
we get $\sharp (A\cap T_2) = \sharp (B\cap T_2)$. Since $\sharp (T\cap
T_2) \le 4$ and $\sharp (A\setminus A\cap T) \le (3d-1)/2-d-1$, we have 
$\sharp (A\cap T_2) \le (3d-1)/2 -d+3 = (d+5)/2$. Hence $\sharp (A\cap T_2) = 
\sharp (B\cap T_2) \ge 2d+2 -(d+5)/2 = (3d-1)/2$. Since $\sharp (A\cap T_2) +\sharp (B\cap T_2) \ge 
\sharp ((A\cup B)\cap T_2) \ge 2d+2$ we get $d=3$ and $A\subset T$. Hence $\sharp(B\cap T_2)\geq 4$ 
so that $B\subset T_2$. Thus $A\subset T $ and $B\subset T_2$ and moreover 
$A\setminus A\cap T_2 = B\setminus B\cap T_2=\emptyset$. It follows that $A = T\cap T_2$. 
Since $A\subset T$ and $T$ is a smooth conic, we have $P\in \langle \nu _3(T)\rangle$
and the symmetric rank of $P$, with respect to the rational normal curve $\nu _3(T)\subset\mathbb {P}^6$,
is $4$. It follows that $\mathcal {S}(P)$ is infinite. By the symmetric case
of \cite{bl}, Corollary 2.2, we have $B\subset \nu _3(T)$ for all $B\in \mathcal {S}(P)$. 
Hence (b) holds, in this case.

Finally, assume that $T_2$ exists and $T=T_2$. I.e. assume $\sharp (T\cap (A\cup B)) \ge 2d+2$. 
In step (ii) of the proof of Theorem \ref{i2}, we proved that
$A\setminus T\cap A = B\setminus T\cap B$ and that $B\cap T$ computes $sr (P_T)$. 
Hence $B\in \{F\cup E\}_{E\in \mathcal {S}(P_T)}$.

\qquad (ii.2) Here we assume the existence of a {\it reducible} conic $T$ such that 
$\sharp (A\cap T)\ge d+1$. Write $T = L_1\cup L_2$ with $\sharp (A\cap L_1) \ge \sharp (A\cap L_2)$. 
If $\sharp (A\cap L_1)\ge (d+2)/2$, then, by step (i), we are in case (a). 
If $\sharp (A\cap L_1)< (d+2)/2$, then we get $\sharp (A\cap L_1) = \sharp (A\cap
L_2)=(d+1)/2$ and $L_1\cap L_2\notin A$. We also get that $d$ is odd. 
It remains simply prove that $\mathcal {S}(P) \ne \{A\}$. Indeed, we proved
that $\mathcal {S}(P)$ is infinite in the second part of step (ii) of the proof of Theorem \ref{i2}.

The proof of the statement is completed.\qed

\providecommand{\bysame}{\leavevmode\hbox to3em{\hrulefill}\thinspace}


\begin{thebibliography}{99}

\bibitem{a}  B. \r{A}dlandsvik, Joins and higher secant varieties. Math.
Scand. {\bf {61}} (1987), 213--222.


\bibitem{bb} E. Ballico and A. Bernardi, Decomposition of homogeneous polynomials with low rank,
arXiv:1003.5157v2~[math.AG], Math. Z. .DOI :10.1007/s00209-011-0907-6

\bibitem{bb1} E. Ballico and A. Bernardi, A partial stratification of secant varieties of 
Veronese varieties via curvilinear subschemes, arXiv:1010.3546v2 ~[math.AG].

\bibitem{bc} E. Ballico and L. Chiantini, A criterion for detecting the identifiability
of symmetric tensors of size three, arXiv:1202.1741~[math.AG].

\bibitem{bgi} A. Bernardi, A., Gimigliano and M. Id\`{a}, Computing symmetric rank for 
symmetric tensors. J. Symbolic. Comput. \textbf{46} (2011), no. 1, 34--53.

\bibitem{bcmt} J. Brachat, P. Comon, B. Mourrain and E. P. Tsigaridas, 
Symmetric tensor decomposition.
Linear Algebra Appl. {\bf {433}} (2010), no. 11--12, 1851--1872.


\bibitem{bb+} W. Buczy\'{n}ska, J. Buczy\'{n}ski. Secant varieties to high degree Veronese 
reembeddings, catalecticant matrices and smoothable Gorenstein schemes. arXiv:1012.3562v4~[math.AG].

\bibitem{bgl} J. Buczy\'{n}ski, A. Ginensky and J. M. Landsberg,
Determinantal equations for secant varieties and the 
Eisenbud-Koh-Stillman conjecture, arXiv:1007.0192v4~[math.AG].

\bibitem{bl} J. Buczy\'{n}ski and J. M. Landsberg, Rank of tensors and a generalization 
of secant varieties, arXiv:0909.4262v3~[math.AG].

\bibitem{cc1} L. Chiantini and C. Ciliberto, On the concept of k-secant order of a variety, 
J. London Math. Soc. (2) {\bf {73}} (2006), no. 2, 436--454.

\bibitem{cs} G. Comas and M. Seiguer, On the rank of a binary form,  
Found. Comp. Math. \textbf{11} (2011), no. 1, 65--78.

\bibitem{cglm} P. Comon, G. H. Golub, L.-H. Lim and B. Mourrain,
Symmetric tensors and symmetric tensor rank,
SIAM J.  Matrix Anal.  {\bf {30}} (2008) 1254--1279.

\bibitem{c2} A. Couvreur, The dual minimum distance of arbitrary
dimensional algebraic-geometric codes, J. Algebra {\bf {350}} (2012), no.
1, 84--107.

\bibitem{h} R. Hartshorne, Algebraic Geometry. Springer-Verlag, Berlin, 1977.

\bibitem{kb} T. Kolda and B. Bader, Tensor decompositions and applications, SIAM
Review {\bf 51} (2009), 455-500. 

\bibitem{l} J. M. Landsberg, Tensors: Geometry and Applications
Graduate Studies in Mathematics, Vol. 118, Amer. Math. Soc. Providence, December 2011.

\bibitem{lt} J. M. Landsberg  and Z. Teitler,
On the ranks and border ranks of symmetric tensors.
Found. Comput. Math. \textbf{10} (2010), no. 3, 339--366.

\bibitem{ls} L.-H. Lim and V. de Silva,  Tensor rank and the ill-posedness of the best 
low-rank approximation problem, SIAM J. Matrix Anal. Appl. \textbf{30} (2008), no. 3,1084--1127.




\end{thebibliography}
\end{document}